\documentclass[a4paper, 11pt]{amsart}

\usepackage[utf8]{inputenc}
\usepackage[all,cmtip]{xy}
\usepackage[english]{babel}
\usepackage{mathtools}
\usepackage{graphicx}
\usepackage{multicol}

\usepackage{amsmath,amssymb,amsthm,mathrsfs}
\usepackage[alphabetic]{amsrefs}

\usepackage{xfrac}
\usepackage{pifont}

\usepackage{array,tabularx}
\usepackage{xcolor}

\usepackage{tikz,tikz-cd}
\usetikzlibrary{positioning, shapes.geometric}
\usetikzlibrary{arrows,arrows.meta}
\usetikzlibrary{calc}
\tikzcdset{arrow style=tikz, diagrams={>={Computer Modern Rightarrow[width=5.5pt,length=3pt]}}}

\usepackage{enumitem}
\usepackage{cleveref}

\usepackage[retainorgcmds]{IEEEtrantools}

\theoremstyle{plain}
\newtheorem{thm}{Theorem}[section] 
\newtheorem*{thm*}{Theorem}
\newtheorem*{mainthm}{Main Theorem}

\newtheorem{lem}[thm]{Lemma}

\counterwithin{equation}{section}

\theoremstyle{definition}
\newtheorem{defn}[thm]{Definition}
\newtheorem*{defn*}{Definition}
\newtheorem{exa}[thm]{Example}
\newtheorem{rem}[thm]{Remark}
\newtheorem*{rem*}{Remark}

\newcommand*{\myproofname}{Proof of Theorem \ref{1thm:inertness}}

\newcommand*{\myproofnames}{Proof of Proposition \ref{prop:pdhmlgy}}

\newcommand{\N}{\varmathbb{N}}

\newcommand{\C}{\varmathbb{C}}
\renewcommand{\H}{\varmathbb{H}}
\renewcommand{\O}{\varmathbb{O}}

\usepackage{geometry}
\usepackage{libertine}
\usepackage[libertine]{newtxmath}
\renewcommand{\mathbb}{\varmathbb}

\input xypic 
\input xy 
\xyoption{all} 
\usepackage{amssymb} 
\usepackage{bbm}

\newcounter{bean}

\title{Gyration Stability for Products}

\author{Sebastian Chenery}
\address{University of Bristol, School of Mathematics, Fry Building, Woodland Road, Bristol, BS8 1UG}
\email{seb.chenery@bristol.ac.uk}

\subjclass[2020]{Primary 57N65; Secondary 57P10, 55P15}
\keywords{Poincar\'e Duality complexes, gyrations, products}

\begin{document}

\begin{abstract}
    A gyration is an operation on Poincar\'e Duality complexes that arises from a certain surgery on the product of a given complex \(N\) and a sphere, parametrised by a chosen twisting. Of particular recent interest is the notion of gyration stability; that is, \(N\) is gyration stable when all of its gyrations have the same homotopy type, regardless of the twisting used. We prove that a product \(N\times M\) of two Poincar\'e Duality complexes is gyration stable when one of the product terms is itself gyration stable, and provide some examples of interest.
\end{abstract}

\maketitle


\section{Introduction} 

Gyrations first arose in geometric topology as a surgery on the Cartesian product of a given manifold and a sphere. Originally defined by Gonz{\'a}lez Acu{\~n}a \cite{gonzalezacuna}, they have since been used in many aspects of geometry and topology \cites{bosio_meersseman, klpt, huang_theriault_stability, ChenTher:pullbacks}. In particular, they occur naturally in the classification circle actions on smooth manifolds \cites{duan, galaz-garcia--reiser} and in the topology of polyhedral products, where they feature in a foundational conjecture of Gitler--L{\'o}pez-de-Medrano \cite{gitler-ldm}, which has recently been solved \cite{cfw}. There has also been much recent study of their homotopy theoretic properties by Basu-Ghosh \cite{basu-ghosh}, Huang \cite{huang_inertness24}, Huang-Theriault \cites{huangtheriault, HuangTher_stabilization}, Stanton-Theriault \cite{StanTher_skeleton-coH}, as well as by Theriault and this author in \cite{ChenTher:gy_stab} and \cite{chenery:fico}.

Throughout this article we will work with Poincar\'e Duality complexes, that is, finite \(CW\)-complexes which obey Poincar\'e Duality. Let $N$ be a path-connected \(n\)-dimensional Poincar\'{e} Duality complex with a single \(n\)-dimensional cell; examples include smooth, closed, oriented, simply-connected \(n\)-manifolds. We write \(\overline{N}\) for its \((n-1)\)-skeleton, and there is a homotopy cofibration 
\[
    S^{n-1} \xrightarrow{f_N} \overline{N} \xrightarrow{\iota_N} N
\] 
where \(f_N\) is the attaching map for the top-cell and \(\iota_N\) is the inclusion of the skeleton. Let \(k\geq 2\) be an integer and take a class \(\tau\in\pi_{k-1}(\mathrm{SO}(n))\). Using the standard linear action of \(\mathrm{SO}(n)\) on \(S^{n-1}\), define the map 
\[
    t:S^{n-1}\times S^{k-1}\rightarrow S^{n-1}\times S^{k-1}
\] 
by \(t(a, x)=(\tau(x)\cdot a,x)\). The \textit{\(k\)-gyration of \(N\) by \(\tau\)} is given by the pushout
\begin{equation*}
    \begin{tikzcd}[row sep=3em, column sep=3em]
        S^{n-1}\times S^{k-1} \arrow[r, "1\times \iota"] \arrow[d, "(f_N\times 1)\circ t"] & S^{n-1}\times D^k \arrow[d] \\
        \overline{N}\times S^{k-1} \arrow[r] & \mathcal{G}^k_\tau(N) 
    \end{tikzcd}
\end{equation*} 
where \(\iota\) is the inclusion of the boundary of the disc. When \(\tau\) is trivial the above pushout is equivalent to a \((k-1,n)\)-type surgery on \(N\times S^{k-1}\) with respect to this trivial choice. Otherwise the surgery is twisted by the action of \(\tau\), and for this reason the homotopy class \(\tau\) is referred to as a \textit{twisting} in the context of gyrations. The more geometric original formulation of Gonz{\'a}lez Acu{\~n}a is written in this notation as
\[
    \mathcal{G}_\tau^k(N)=\left((N\backslash\mathrm{Int}(D^n))\times S^{k-1}\right)\cup_t\left(S^{n-1}\times D^k\right)
\]
where \(D^n\subset N\) is an embedded \(n\)-disc centred at a chosen base point of \(N\). Of particular recent interest is the question of gyration stability, where one asks for a given \(N\) and \(k\geq 2\) whether the homotopy type of \(\mathcal{G}^k_\tau(N)\) is independent of the chosen twisting: \(N\) is called \textit{\(\mathcal{G}^k\)-stable} if \(\mathcal{G}^{k}_{\tau}(N)\simeq\mathcal{G}^{k}_{\omega}(N)\) for all twistings \(\tau,\omega\in\pi_{k-1}(\mathrm{SO}(n))\). 

For a fixed \(k\), gyration stability is equivalent to all \(k\)-gyrations having a single homotopy type, whatever twisting \(\tau\) is taken. Indeed, one can take an enumerative approach to this question: if the number of distinct homotopy types can be shown to be strictly greater than one, then gyration \textit{instability} follows. This was the method used by Theriault and the author in \cite{ChenTher:gy_stab} to show that the projective planes \(\C P^2\), \(\H P^2\) and \(\O P^2\) exhibit gyration instability for several values of \(k\). Alternatively, one can pursue a more direct computation of the homotopy type of gyrations, as was done in \cite{chenery:fico}, where it was shown that a binary sphere product \(S^p\times S^q\) (with \(q \geq p \geq 2\) with \(q\geq3\)) is \(\mathcal{G}^k\)-stable for all \(k<q\) by observing that there is a homotopy equivalence 
\[
    \mathcal{G}_\tau^k(S^p \times S^q) \simeq (S^p \times S^{q+k-1})\#(S^q \times S^{p+k-1})
\]
which is independent of \(\tau\). Spheres themselves are \(\mathcal{G}^k\)-stable for all \(k\) (cf. \cite{ChenTher:gy_stab}*{Example 4.3}) so a natural line of enquiry follows from the above example: given two Poincar\'e Duality complexes \(N\) and \(M\), if \(N\) is \(\mathcal{G}^k\)-stable, what can we say for the product \(N\times M\)?

The answer is known in the \(k=2\) case when \(N\) and \(M\) are both closed, oriented, smooth manifolds, due to Duan. For \(N\) of dimension \(n>2\) which is \(\mathcal{G}^2\)-stable, it is proved in \cite{duan} that \(N\times M\) is also \(\mathcal{G}^2\)-stable, without any additional hypotheses on \(M\). It is our purpose in this article to produce a general answer in our homotopy theoretic framework for gyrations of Poincar\'e Duality complexes. The Main Theorem is the following. 

\begin{mainthm}[Theorem \ref{thm:main}]
    Let \(n,m\) and \(k\) be integers with \(n,m>2\), \(k>1\) and \(n>k\), and suppose that \(N\) and \(M\) are two path connected Poincar\'e Duality complexes of dimension \(n\) and \(m\), respectively. Then, if \(N\) is \(\mathcal{G}^k\)-stable, so is \(N \times M\).
\end{mainthm}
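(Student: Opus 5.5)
The plan has three steps: reduce the twisting on \(N\times M\) to a twisting on \(N\), cut \(\mathcal{G}^k_\tau(N\times M)\) into pieces in which only a gyration of \(N\) carries the twisting, and then use \(\mathcal{G}^k\)-stability of \(N\) to compare the pieces.

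\emph{Step 1: reduce the twisting.} A twisting for \(N\times M\) is a class \(\tau\in\pi_{k-1}(\mathrm{SO}(n+m))\). Since \(k<n\), the map \(\pi_{k-1}(\mathrm{SO}(n))\to\pi_{k-1}(\mathrm{SO}(n+m))\) induced by the standard inclusion is surjective; this comes from the fibrations \(\mathrm{SO}(j)\to\mathrm{SO}(j+1)\to S^j\) with \(j\geq n>k-1\). So we may write \(\tau=i_*\tau'\) with \(\tau'\in\pi_{k-1}(\mathrm{SO}(n))\). Homotopic twistings give homotopy equivalent gyrations, since the pushout only depends on the homotopy class of \(t\). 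Hence we may assume \(\tau\) acts only on the first \(n\) coordinates of \(\mathbb{R}^{n+m}\).

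\emph{Step 2: decompose using the product cell structure.} The top cell of \(N\times M\) is \(D^n\times D^m\), with boundary
\[
S^{n+m-1}=S^{n-1}\times D^m\cup D^n\times S^{m-1}.
\]
Write \(N_0=N\setminus\mathrm{Int}(D^n)\) and \(M_0=M\setminus \mathrm{Int}(D^m)\), so that \(N\times M\setminus\mathrm{Int}(D^{n+m})=N_0\times M\cup N\times M_0\). The rotation \(\tau'(x)\) preserves both pieces of \(S^{n+m-1}\).
\begin{itemize}
\item On the piece \(S^{n-1}\times D^m\times S^{k-1}\), the map \(t\) is exactly the gyration twist of \(N\), crossed with \(D^m\).
\item On the piece \(D^n\times S^{m-1}\times S^{k-1}\), the map \(t\) is a rotation of the contractible factor \(D^n\), which extends over the whole of \(N\times S^{m-1}\times S^{k-1}\) only up to the twist on the collar.
\end{itemize}
Regrouping the Gonz\'alez Acu\~na formula along this decomposition, I would aim to exhibit \(\mathcal{G}^k_\tau(N\times M)\) as a homotopy pushout
\[
\mathcal{G}^k_{\tau'}(N)\times M_0\ \longleftarrow\ \mathcal{G}^k_{\tau'}(N)\times S^{m-1}\ \longrightarrow\ Y_{\tau'},
\]
where \(Y_{\tau'}\) is obtained from \(\mathcal{G}^k_{\tau'}(N)\times S^{m-1}\) by attaching \(N\times S^{m-1}\times D^k\)-type pieces. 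The intended upshot is that the twisting enters only through \(\mathcal{G}^k_{\tau'}(N)\), together with the maps \(N_0\times S^{k-1}\to \mathcal{G}^k_{\tau'}(N)\) and \(S^{n-1}\times D^k\to \mathcal{G}^k_{\tau'}(N)\) used to form it. Stated homotopy-theoretically: with \(\overline{N\times M}\) the skeleton, \(f_{N\times M}\) is the relative Whitehead product \([f_N,f_M]\)-type map. Gyration then commutes with the half-smash / join structure, giving a decomposition of \(\mathcal{G}^k_\tau(N\times M)\) as a pushout over \(\overline{N}\times\overline{M}\times S^{k-1}\) in which \(\overline N\times S^{k-1}\cup_{(f_N\times1)\circ t}S^{n-1}\times D^k=\mathcal{G}^k_{\tau'}(N)\) appears as a block.

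\emph{Step 3: apply stability of \(N\) (main obstacle).} Given \(\tau',\omega'\), stability supplies a homotopy equivalence \(e:\mathcal{G}^k_{\tau'}(N)\simeq\mathcal{G}^k_{\omega'}(N)\). To transport it through the pushout of Step 2, \(e\) must be compatible, up to homotopy, with the structure maps from \(\overline N\times S^{k-1}\) (or \(N_0\times S^{k-1}\)). An arbitrary equivalence need not be.

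I would first try to sidestep this by showing that the pushout depends only on data that \(e\) can be made to respect. Concretely, I would precompose with a self-equivalence of \(\overline N\times S^{k-1}\): adjust \(e\) by an automorphism of \(\overline N\times S^{k-1}\) that is the identity on the \(M\)-direction, so that the two inclusions agree up to homotopy. Any such automorphism extends trivially across the \(M\)-pieces, which gives a map of pushout diagrams that is an objectwise equivalence and hence an equivalence of homotopy pushouts.

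If this fails, the fallback is to argue via cohomology and Poincar\'e duality. One would show that the composite \(\overline N\times S^{k-1}\to\mathcal{G}^k(N)\) is determined up to homotopy by the inclusion of the \((n+k-2)\)-skeleton; \(n>k\) should keep the relevant cells in a range where \(e\) can be homotoped to be cellular and to fix this skeleton.

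Granting compatibility, the gluing lemma for homotopy pushouts yields \(\mathcal{G}^k_\tau(N\times M)\simeq\mathcal{G}^k_\omega(N\times M)\), which is the required \(\mathcal{G}^k\)-stability of \(N\times M\).
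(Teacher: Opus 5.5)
\textbf{The gap is Step 3.} You flag it yourself, but neither remedy closes it. $\mathcal{G}^k$-stability of $N$ only gives an abstract equivalence $e\colon\mathcal{G}^k_{\tau'}(N)\simeq\mathcal{G}^k_{0}(N)$, whereas the gluing lemma needs a map of diagrams. (Incidentally, the correct regrouping in Step 2 is
\[
\mathcal{G}^k_\tau(N\times M)=\big(\mathcal{G}^k_{\tau'}(N)\times D^m\big)\cup_{\mathcal{G}^k_{\tau'}(N)\times S^{m-1}}\big(N\times M_0\times S^{k-1}\cup_t D^n\times S^{m-1}\times D^k\big),
\]
with $D^m$, not $M_0$.)

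What your first remedy really needs is a self-equivalence $\varphi$ of $\overline N\times S^{k-1}$ with $\varphi\circ(f_N\times1)\simeq(f_N\times1)\circ t$. Given such a $\varphi$, your plan does work. $\varphi$ extends to a self-equivalence of $N\times S^{k-1}$, not trivially as you suggest, but by rotating the top cell $D^n$ by $\tau'(x)$ and using the homotopy on a collar. Crossing with $1_M$ then gives a self-equivalence of $\overline{N\times M}\times S^{k-1}=(\overline N\times M\cup_{\overline N\times\overline M}N\times\overline M)\times S^{k-1}$ intertwining $f_{N\times M}\times1$ with $(f_{N\times M}\times1)\circ t$.

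But nothing in the definition of $\mathcal{G}^k$-stability produces $\varphi$, and you give no argument that one exists. It is an extra, fibrewise hypothesis that an abstract equivalence of pushouts does not obviously provide. The cohomological fallback cannot supply it either. Cellular approximation makes $e$ cellular, but does not make its restriction to $\overline N\times S^{k-1}$ agree with the structure map up to homotopy, and cohomology and Poincar\'e duality do not control such homotopy classes.

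\textbf{Comparison with the paper.} The paper works only with attaching maps. It pastes the join description of $\tau_x$ (Lemma~\ref{lem:action}, your Step 1) onto the product cube (\ref{dgm:product_cube}) to get $f_{N\times M}\circ\tau_x\simeq f_{N\times M}$, and passes to and from stability via Lemma~\ref{lem:gystab}. That lemma is exactly where your obstacle sits, and it does not resolve it.
\begin{itemize}
\item Read pointwise in $x$, the condition $f_N\circ\tau_x\simeq f_N$ holds for every $N$, because $\tau(x)$ lies in the connected group $\mathrm{SO}(n)$. The lemma would then make $\H P^2$ $\mathcal{G}^2$-stable, contrary to the paper's own example.
\item Read as a homotopy $(f_N\times1)\circ t\simeq f_N\times1$ over $S^{k-1}$ (your case $\varphi=1$), its forward direction fails. $S^p\times S^q$ with $p,q\geq3$ is $\mathcal{G}^2$-stable. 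Yet for $\tau\neq0$ and $f=[\iota_p,\iota_q]$, the maps $(f\times1)\circ t$ and $f\times1$ differ on the top cell of $S^{p+q-1}\times S^1$ by $[\iota_p,\iota_q]\circ\eta\neq0$. This is nonzero by Hilton--Milnor, and there is no indeterminacy since $S^p\vee S^q$ is $2$-connected.
\end{itemize}
Since $N=S^p$ satisfies the fibrewise condition on the nose, the same example shows the cube-pasting cannot produce a fibrewise homotopy for $N\times M$. The correct product argument is the one above, which yields a non-identity self-equivalence of $\overline{N\times M}\times S^{k-1}$.

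So neither your proposal nor the paper's argument derives the theorem from plain $\mathcal{G}^k$-stability. What your gluing approach does prove is the statement under the existence of $\varphi$, for instance for $N=S^n$, where $\varphi=1$.
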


We achieve this result through manipulations of the self-map \(t:S^{n+m-1}\times S^{k-1} \rightarrow S^{n+m-1}\times S^{k-1}\) that arises from the \(\mathrm{SO}(n+m)\)-action in the \(k<n\) case, and combining this with knowledge of the \(CW\)-structure of the product \(N\times M\). To do so we first recall the necessary facts about the \(CW\)-complex structure of a product in Section \ref{sec:products}. We follow this with a discussion of gyrations in Section \ref{sec:gy}, and then develop an understanding of \(\pi_{k-1}(\mathrm{SO}(n+m))\) for \(k<n\) in Section \ref{sec:action}. We then conclude with the proof of the Main Theorem in Section \ref{sec:mainthm} and give some examples of interest.

\subsubsection*{Acknowledgements} The author was supported by the Heilbronn Institute for Mathematical Research during preparation of this work.

\section{Attaching Maps for Products}\label{sec:products}

We first give a construction for the attaching map of the top-cell of a product of two Poincar\'e Duality complexes. Throughout, we assume that our Poincar\'e Duality complexes \(N\) and \(M\), of dimensions \(n\) and \(m\) respectively, are both path connected and with the structure of a $CW$-complex with a single cell in their top dimension. Denote the attaching maps of their top-cells by \(f_N\) and \(f_M\). Then there are homotopy cofibrations 
\[
    S^{n-1}\xrightarrow{f_N}\overline{N}\xrightarrow{\iota_N}N \text{\; and \;} S^{m-1}\xrightarrow{f_M}\overline{M}\xrightarrow{\iota_M}M.
\]
The \(CW\)-complexes \(\overline{N}\) and \(\overline{M}\) have the homotopy types of \(N \backslash\mathrm{Int}(D^n)\) and \(M \backslash \mathrm{Int}(D^n)\), respectively. Fix such homotopy equivalences. Under these, using \(N\) to illustrate, the map including the boundary \(S^{n-1}\) of the deleted disk in \(\overline{N}\) becomes the attaching map \(f_{N}\), and the composite \(\alpha_N:=\iota_N\circ f_N\) has the homotopy class of the restriction of an embedding \(\beta_N:D^n\rightarrow N\) to its boundary, reflecting that \(\alpha_N\) is of course null homotopic. The same is true of the similarly defined \(\alpha_M\) and \(\beta_M\). 

Now consider the \((n+m)\)-dimensional Poincar\'e Duality complex \(N \times M\). Still regarding the \(CW\)-complex structures of \(N\) and \(M\) as fixed, our assumptions on \(N\) and \(M\) are sufficient to guarantee local compactness for both, in the sense of \cite{hatcher}, and thus \(N \times M\) is also a \(CW\)-complex (see also \cite{whitehead_combinatorial}). Indeed, \(N\times M\) inherits a natural \(CW\)-complex structure; in particular, its \((n+m-1)\)-skeleton \(\overline{N\times M}\) is realised in our above notation by the homotopy pushout square
\begin{equation}\label{dgm:productpushout1}
    \begin{tikzcd}[row sep=3em, column sep=3em]
        S^{n-1}\times S^{m-1} \arrow[r, "\alpha_N\times f_M"] \arrow[d, "f_N\times\alpha_M"] & N\times\overline{M} \arrow[d] \\
        \overline{N}\times M \arrow[r] & \overline{N\times M}.
    \end{tikzcd}
\end{equation}

This square can be refined further. Recall that the sphere \(S^{m+n-1}\simeq \partial (D^n\times D^m)\) is recovered from the pushout defining the boundary of a product of two discs, where we use \(\iota\) to denote the inclusion of a sphere as the boundary of a disc:
\begin{equation}\label{dgm:spherepushout_boundary}
    \begin{tikzcd}[row sep=3em, column sep=3em]
        S^{n-1}\times S^{m-1} \arrow[r, "\iota\times 1"] \arrow[d, "1\times\iota"] & D^n\times S^{m-1} \arrow[d] \\
        S^{n-1}\times D^{m} \arrow[r] & \partial (D^n\times D^m). 
    \end{tikzcd}
\end{equation} 
Since \((f_N\times\beta_M)\circ(1\times\iota)\simeq f_N\times\alpha_M\), and vice versa for \(\alpha_N\times f_M\), the pushout square (\ref{dgm:productpushout1}) thus factors in the following way, realising the attaching map \(f_{N \times M}\) of the top-cell of the product as an induced map
\begin{equation}\label{dgm:productpushout2}
    \begin{tikzcd}[row sep=3em, column sep=3em]
        S^{n-1}\times S^{m-1} \arrow[r, "\iota\times1"] \arrow[d, "1\times\iota"] & D^n\times S^{m-1} \arrow[d] \arrow[r, "\beta_N\times f_M"] & N\times\overline{M} \arrow[dd] \\
        S^{n-1}\times D^m \arrow[r] \arrow[d, "f_N\times \beta_M"] & S^{n+m-1} \arrow[dr, dashed, "f_{N \times M}"] & \\
        \overline{N}\times M \arrow[rr] && \overline{N \times M}.
    \end{tikzcd}
\end{equation}

\begin{rem}
    Note that Diagram (\ref{dgm:productpushout2}) is equivalent to the homotopy commutative cube
    \begin{equation}\label{dgm:product_cube}
        \begin{tikzcd}[row sep=2em, column sep=2em]
            S^{n-1} \times S^{m-1} \arrow[rr, equals] \arrow[dr, "1\times\iota"] \arrow[dd,  "\iota\times1"] && S^{n-1} \times S^{m-1} \arrow[dd, near end, "\alpha_N\times f_M"] \arrow[dr, "f_N\times\alpha_M"] \\
            & S^{n-1}\times D^m \arrow[rr, crossing over, near start, "f_N\times\beta_M"] && \overline{N}\times M \arrow[dd] \\ 
            D^n\times S^{m-1} \arrow[rr, near end, "\beta_N\times f_M"] \arrow[dr] && N\times\overline{M} \arrow[dr] \\
            & S^{n+m-1} \arrow[rr, dashed, "f_{N\times M}"] \arrow[from=uu, crossing over] && \overline{N\times M}
        \end{tikzcd}
    \end{equation}
    in which both the left- and right-hand vertical faces are homotopy pushouts.
\end{rem}

\begin{exa}\label{exa:sphereprod_skeleton}
    Let \(r>1\) be a (finite) integer, take a set \(\{n_1,\dots, n_r\in\N:n_i>1\text{ for all }i=1,\dots,r \}\) and consider the product of spheres \(P=\prod_{i=1}^r S^{n_i}\). This construction recovers the fact that \(\overline{P}\) has the homotopy type of \(\mathrm{FW}(S^{n_1},\dots, S^{n_r})\), the \textit{fat wedge}, defined generally as
    \[
        \mathrm{FW}(X_1,\dots, X_m)=\Big\lbrace(x_1,\dots,x_m)\in\prod_{i=1}^m X_i:x_i=\ast\text{ for at least one } i=1,\dots,m\Big\rbrace.
    \]
    Note that it is sufficient for our purposes in this article to realise the attaching map of the top-cell via Diagram (\ref{dgm:product_cube}), and to leave the homotopy class of the attaching map ambiguous. Indeed, we make no claims towards a decomposition of what (as in this example) can be a very complicated map. 
\end{exa}

\section{Gyrations}\label{sec:gy}

Let \(N\) be a path-connected \(n\)-dimensional Poincar\'e Duality complex with a single \(n\)-cell and, as in Section \ref{sec:products}, let \(\overline{N}\) be the \((n-1)\)-skeleton of \(N\) and write \(f_N\) for the attaching map of its top-cell. Let \(k\geq 2\) be an integer and take a homotopy class \(\tau\in\pi_{k-1}(\mathrm{SO}(n))\). Then using the standard linear action of \(\mathrm{SO}(n)\) on \(S^{n-1}\) define the map 
\[
    t:S^{n-1}\times S^{k-1}\rightarrow S^{n-1}\times S^{k-1}
\] 
by \(t(a, x)=(\tau(x)\cdot a,x)\). 

\begin{defn}\label{def:gy}
    Let \(k\geq2\) be an integer and let \(N\) be a path-connected \(n\)-dimensional Poincar\'e Duality complex with a single \(n\)-cell. Define the \textit{\(k\)-gyration of \(N\) by \(\tau\)} to be the space defined by the (strict) pushout
    \begin{equation}\label{dgm:gyrationdef}
        \begin{tikzcd}[row sep=3em, column sep=3em]
            S^{n-1}\times S^{k-1} \arrow[r, "1\times \iota"] \arrow[d, "(f_N\times 1)\circ t"] & S^{n-1}\times D^k \arrow[d] \\
            \overline{N}\times S^{k-1} \arrow[r] & \mathcal{G}^k_\tau(N) 
        \end{tikzcd}
    \end{equation} 
    where \(\iota\) is the inclusion of the boundary of the disc. When the context is clear, we will usually just write \textit{gyration} for \(\mathcal{G}^k_\tau(N)\).
\end{defn} 

An important special case is when \(\tau\) is the trivial class, in which case \(t\) is the identity map and the gyration is written as \(\mathcal{G}_{0}^{k}(N)\). We call this the trivial \(k\)-gyration

\begin{rem}
In general, a \(k\)-gyration \(\mathcal{G}^{k}_{\tau}(N)\) is a Poincar\'{e} Duality complex of dimension \(n+k-1\). There is an alternative surgery definition (see for example \cite{huang_inertness24}*{Section 12}) whereby the trivial \(k\)-gyration is expressed as a \((k-1,n)\)-type surgery on \(N\times S^{k-1}\). When \(\tau\) is non-trivial the surgery is twisted, and for this reason we will often refer to the class \(\tau\) as a \textit{twisting} in the context of gyrations. Furthermore, via this surgery definition it follows that if \(N\) has the homotopy type of an oriented manifold then \(\mathcal{G}^{k}_{\tau}(N)\) is an \((n+k-1)\)-manifold with an orientation inherited from that of \(N\).
\end{rem}

As in the Introduction, there is a notion of stability, formulated in the following way.

\begin{defn}
    Let \(N\) be a path-connected \(n\)-dimensional Poincar\'e Duality complex with a single \(n\)-cell. For a given \(k\geq2\), \(M\) is called \textit{\(\mathcal{G}^k\)-stable} if \(\mathcal{G}^{k}_{\tau}(N)\simeq\mathcal{G}^{k}_{\omega}(N)\) for all twistings \(\tau,\omega\in\pi_{k-1}(\mathrm{SO}(n))\). 
\end{defn}
    
When the context is clear this property is called \textit{gyration stability}. Note that \(N\) is \(\mathcal{G}^k\)-stable if and only if for all twistings \(\tau\) there is a homotopy equivalence \(\mathcal{G}^{k}_{\tau}(N)\simeq\mathcal{G}^{k}_0(N)\). 

We need to reframe some aspects slightly for our later work. For a given twisting \(\tau\), take \(x\in S^{k-1}\) and write \(\tau_x:S^{n-1}\rightarrow S^{n-1}\) for the self-map \(a\mapsto \tau(x)\cdot a\). Thus for each \(x\), the composite \((f_N\times1)\circ t\) of (\ref{dgm:gyrationdef}) performs 
\[
    (a,x)\longmapsto(f_N\circ\tau_x(a), x).
\] 

\begin{lem}\label{lem:gystab}
    Let \(N\) be a path-connected \(n\)-dimensional Poincar\'e Duality complex with a single \(n\)-cell and take an integer \(k\geq2\). Then \(N\) is \(\mathcal{G}^k\)-stable if and only if there is a homotopy \(f_N\circ\tau_x\simeq f_N\) for all \(x\in S^{k-1}\) and for all \(\tau\in\pi_{k-1}(\mathrm{SO}(n))\).
\end{lem}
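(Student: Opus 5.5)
Throughout, I read the condition uniformly in \(x\): a homotopy \(f_N\circ\tau_x\simeq f_N\) \emph{for all \(x\in S^{k-1}\)} means a single homotopy \(H\colon S^{n-1}\times S^{k-1}\times I\to\overline{N}\) with \(H(a,x,0)=f_N(\tau_x(a))\) and \(H(a,x,1)=f_N(a)\). Equivalently, \((f_N\times 1)\circ t\simeq f_N\times 1\) as maps \(S^{n-1}\times S^{k-1}\to\overline{N}\times S^{k-1}\) over the projection to \(S^{k-1}\). I use this reading in both directions. A pointwise reading would be vacuous: \(\mathrm{SO}(n)\) is path connected, so each \(\tau_x\) is individually homotopic to the identity.

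\emph{If.} The pushout (\ref{dgm:gyrationdef}) is a strict pushout along the cofibration \(1\times\iota\colon S^{n-1}\times S^{k-1}\to S^{n-1}\times D^k\), so it is also a homotopy pushout. Its homotopy type therefore depends only on the homotopy class of the other leg. By hypothesis \((f_N\times1)\circ t\simeq f_N\times 1\), so \(\mathcal{G}^k_\tau(N)\simeq\mathcal{G}^k_0(N)\) for every \(\tau\). As noted after the definition, this is exactly \(\mathcal{G}^k\)-stability.

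\emph{Only if.} Suppose \(\mathcal{G}^k_\tau(N)\simeq\mathcal{G}^k_0(N)\) for all \(\tau\). I would exhibit each gyration as obtained from a fixed space by attaching cells along maps that encode \(f_N\circ\tau_x\). Decompose \(S^{n-1}\times D^k\) as \(S^{n-1}\times\ast\) with a \(k\)-cell and an \((n+k-1)\)-cell attached. Then \(\mathcal{G}^k_\tau(N)\) becomes \(\overline{N}\times S^{k-1}\) with two cells attached:
\begin{itemize}
\item a \(k\)-cell, attached along \(x\mapsto f_N(\tau_x(\ast))\);
\item a top cell, attached along a map that restricts on \(S^{n-1}\times S^{k-1}\) to \((f_N\times1)\circ t\).
\end{itemize}
I would then try to upgrade an abstract homotopy equivalence \(\mathcal{G}^k_\tau(N)\to\mathcal{G}^k_0(N)\) to one that is the identity on \(\overline{N}\times S^{k-1}\), up to homotopy. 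Cellular approximation and the cohomology ring fixed by Poincar\'e Duality would give the control needed for this. With such a map in hand, the two attaching data must agree up to homotopy, and so \((f_N\times1)\circ t\simeq f_N\times 1\), which is the uniform homotopy required.

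The converse is the main obstacle. An arbitrary homotopy equivalence between gyrations need not respect the subcomplex \(\overline{N}\times S^{k-1}\), and composing with self-equivalences of that subcomplex could change the attaching data. My first attempt would be to correct the equivalence by a self-equivalence of \(\overline{N}\times S^{k-1}\). If that fails, I would fall back on reading stability for this lemma as equivalence of the defining pushout diagrams, under which the converse becomes formal.
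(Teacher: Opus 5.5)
Your ``if'' direction is correct and is the paper's argument: the pushout is homotopy invariant along the cofibration \(1\times\iota\). You are also right that only the uniform reading is meaningful, since pointwise the condition always holds.

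The gap is the ``only if'' direction, and it cannot be closed, because under your reading the implication is false. Take \(N=S^2\times S^3\) and \(k=2\), with \(\overline{N}=S^2\vee S^3\) and \(f_N\) the standard Whitehead product map on \(S^4=\partial(D^2\times D^3)\). Represent the nontrivial \(\tau\in\pi_1(\mathrm{SO}(5))\) by \(x\mapsto I_2\oplus\sigma(x)\), with \(\sigma\) a generating loop in \(\mathrm{SO}(3)\) rotating about a fixed axis. Since \(f_N\) is equivariant, \(f_N\circ\tau_x=(1\vee\sigma_x)\circ f_N\). Hence \((f_N\times1)\circ t=e\circ(f_N\times1)\) for the fibrewise homeomorphism \(e(y,x)=((1\vee\sigma_x)(y),x)\) of \(\overline{N}\times S^1\). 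So \(e\) induces a homeomorphism \(\mathcal{G}^2_0(N)\cong\mathcal{G}^2_\tau(N)\), in line with the stability of \(S^2\times S^3\) quoted in the introduction.

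Yet \((f_N\times1)\circ t\not\simeq f_N\times1\). Such a homotopy amounts to a null-homotopy of the loop \(x\mapsto f_N\circ\tau_x\) in \(\mathrm{Map}(S^4,\overline{N})\) at \(f_N\). This loop lies in the based mapping space, with class \([\iota_2,\iota_3]\circ\eta_4\in\pi_5(S^2\vee S^3)\), where \(\eta_4\) is the \(J\)-image of \(\tau\). By G.W.~Whitehead's description of the boundary map of the evaluation fibration as \(\beta\mapsto\pm[\beta,f_N]\), the loop is null in \(\mathrm{Map}(S^4,\overline{N})\) iff \([\iota_2,\iota_3]\circ\eta_4\) is a multiple of \([\iota_2,[\iota_2,\iota_3]]\). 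By Hilton's theorem these lie in the distinct summands \(\pi_5(S^4)\) and \(\pi_5(S^5)\) of \(\pi_5(S^2\vee S^3)\), and the first is nonzero.

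So in this example the equivalence of gyrations comes from a self-equivalence \(e\) of \(\overline{N}\times S^{k-1}\) that is not homotopic to the identity. That is exactly the correction you considered as a ``first attempt'', but it proves a different statement from the lemma. Since the lemma's conclusion fails here, no refinement of the equivalence (cellular approximation, cohomology, or otherwise) can rescue your route. Your fallback of redefining stability as equivalence of the defining pushout diagrams also changes the statement, and even then such an equivalence may involve an \(e\) of this kind.

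The paper's own proof of this direction is only the assertion that if the homotopy failed for some \(\tau\) and \(x\), then \(\mathcal{G}^k_\tau(N)\not\simeq\mathcal{G}^k_0(N)\). That is precisely the step you flagged as the obstacle, and the example shows it is false. So neither your proposal nor the paper establishes the ``only if'' direction as stated.
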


\begin{proof}
    If \(N\) is \(\mathcal{G}^k\)-stable then there is a homotopy equivalence \(\mathcal{G}^{k}_{\tau}(N)\simeq\mathcal{G}^{k}_0(N)\) for all \(\tau\), and note that if \(\tau\) is the trivial class then every \(\tau_x\) is homotopic to the identity map. Thus in the \(\mathcal{G}^k\)-stable case the homotopy pushout (\ref{dgm:gyrationdef}) forces a homotopy \(f_N\circ\tau_x\simeq f_N\) for all \(x\in S^{k-1}\) and for all \(\tau\). This holds since, if there existed a \(\tau\) and \(x\) for which this homotopy did not hold, then \(\mathcal{G}^{k}_{\tau}(N)\not\simeq\mathcal{G}^{k}_0(N)\), contradicting the hypothesis that \(N\) is \(\mathcal{G}^k\)-stable.
    
    Conversely, if for some twisting \(\tau\) we had \(f_N\circ\tau_x\simeq f_N\) for all \(x\in S^{k-1}\), then by construction this implies that \((f_N\times1)\circ t\simeq f_N\times 1\), and so the pushout (\ref{dgm:gyrationdef}) gives a homotopy equivalence \(\mathcal{G}^{k}_{\tau}(N)\simeq\mathcal{G}^{k}_0(N)\). It follows that if this holds for every \(\tau\) then \(N\) is \(\mathcal{G}^k\)-stable.
\end{proof}

\section{The Action of \(\pi_{k-1}(\mathrm{SO}(n+m))\) via the Join}\label{sec:action}

In order to understand gyrations of the product \(N\times M\) we must first understand \(\pi_{k-1}(\mathrm{SO}(n+m))\). To emphasise, \(n\) comes in our context as the dimension of \(N\); we assumed that \(n>2\), and we may consider this as fixed for what follows in this section (without loss of generality). Similarly, one can consider \(m>2\) to be some other fixed integer. When \(k<n\) there is a well known isomorphism 
\[
    \pi_{k-1}(\mathrm{SO}(n+m))\cong\pi_{k-1}(\mathrm{SO}(n))
\]
induced via the canonical map which takes a matrix \(A\in\mathrm{SO}(n)\) and includes into \(SO(n+m)\) by adjoining an identity matrix \(I_m\). Given \(\tau\in\pi_{k-1}(\mathrm{SO}(n+m))\) we will write \(\tau'\) for its image in \(\pi_{k-1}(\mathrm{SO}(n))\) under this isomorphism.

The way in which such an element of \(\pi_{k-1}(\mathrm{SO}(n+m))\) acts on \(S^{n+m-1}\) is understood via the join. Recall that the join \(A\ast B\) is a quotient space of \(A\times B\times I\) where one identifies \((a,b,1)\) to \((a,\ast,1)\), \((a,b,0)\) to \((\ast,b,0)\) and \((\ast,\ast,t)\) to \((\ast,\ast,0)\). Moreover, there is a homotopy equivalence \(\Sigma A\wedge B\simeq A\ast B\). For \(A=S^{n-1}\) and \(B=S^{m-1}\) one has a homotopy pushout
\begin{equation*}\label{dgm:spherepushout_1}
    \begin{tikzcd}[row sep=3em, column sep=3em]
        S^{n-1}\times S^{m-1} \arrow[r, "\iota\times 1"] \arrow[d, "1\times\iota"] & D^n\times S^{m-1} \arrow[d] \\
        S^{n-1}\times D^{m} \arrow[r] & S^{n-1}\ast S^{m-1}. 
    \end{tikzcd}
\end{equation*} 
Note that the above pushout square is equivalent, in the sense of Arkowitz \cite{ark}, to (\ref{dgm:spherepushout_boundary}). When \(k<n\) and given \(\tau\in\pi_{k-1}(\mathrm{SO}(n+m))\), for a fixed \(x\in S^{k-1}\) the action of \(\tau(x)\) on \(S^{n+m-1}\) is thus recovered by acting as \(\tau'(x)\) on \(S^{n-1}\) and acting trivially on \(S^{m-1}\) and then assembling via the join in the following way
\begin{equation}\label{eq:join}
    \tau(x)\cdot(a,b,t)/_\sim \longmapsto (\tau'(x)\cdot a,b,t)/_\sim.
\end{equation}
for a general point \((a,b,t)/_\sim\in S^{n-1}\ast S^{m-1}\). We record this as the following lemma.

\begin{lem}\label{lem:action}
    Let \(k<n\) and take a homotopy class \(\tau \in \pi_{k-1}(\mathrm{SO}(n+m))\). Then for any \(x\in S^{k-1}\) there is a homotopy commutative diagram
    \begin{equation*}
        \begin{tikzcd}[row sep=3em, column sep=3em]
            S^{n-1}\times S^{m-1} \arrow[r, "\iota\times1"] \arrow[d, "1\times \iota"] & D^n\times S^{m-1} \arrow[d] \arrow[r, "1\times1"] & D^n\times S^{m-1} \arrow[dd] \\
            S^{n-1}\times D^m \arrow[r] \arrow[d, "\tau'_x\times1"] & S^{n+m-1} \arrow[dr, dashed, "\tau_x"] & \\
            S^{n-1}\times D^m \arrow[rr] && S^{n+m-1}
        \end{tikzcd}
    \end{equation*}
    in which both the outer and the upper-left squares are homotopy pushouts, inducing the map \(\tau_x\). \qed
\end{lem}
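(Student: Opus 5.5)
The plan is to compress the stability argument for \(\pi_{k-1}(\mathrm{SO}(n+m))\) into a strict model and then pass to homotopy classes. Since \(k<n\), the isomorphism \(\pi_{k-1}(\mathrm{SO}(n))\to\pi_{k-1}(\mathrm{SO}(n+m))\) induced by \(A\mapsto A\oplus I_m\) lets us replace \(\tau\) by a homotopic representative of the form \(x\mapsto \tau'(x)\oplus I_m\). Homotopic maps \(S^{k-1}\to\mathrm{SO}(n+m)\) give homotopic actions: a homotopy \(H:S^{k-1}\times I\to\mathrm{SO}(n+m)\) yields a homotopy of \(t\), and in particular of each \(\tau_x\). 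So it suffices to prove the diagram for this block-diagonal representative, and there I expect it to commute strictly.

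Next I identify \(S^{n+m-1}\subset\mathbb{R}^n\oplus\mathbb{R}^m\) with \(\partial(D^n\times D^m)\), up to a radial homeomorphism that is \(\mathrm{SO}(n)\times\mathrm{SO}(m)\)-equivariant. I use the decomposition \(\partial(D^n\times D^m)=(S^{n-1}\times D^m)\cup(D^n\times S^{m-1})\), glued along \(S^{n-1}\times S^{m-1}\); this is the strict pushout of Diagram (\ref{dgm:spherepushout_boundary}). Because \(\tau'(x)\in\mathrm{SO}(n)\) acts linearly on \(\mathbb{R}^n\), it preserves \(D^n\) and \(S^{n-1}\). The matrix \(\tau'(x)\oplus I_m\) therefore restricts to \(\tau'_x\times 1\) on \(S^{n-1}\times D^m\) and to \(\tau'(x)|_{D^n}\times 1\) on \(D^n\times S^{m-1}\). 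These agree on the intersection, which gives exactly the map induced by the pushout. Equivalently, in join coordinates this is formula (\ref{eq:join}).

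The remaining point is that the diagram has \(1\times1\) on \(D^n\times S^{m-1}\), not \(\tau'(x)\times 1\). The map \(\tau'(x)|_{D^n}\) is homotopic to the identity rel nothing, because \(D^n\) is contractible. Moreover \(\mathrm{SO}(n)\) is path connected, so choosing a path from \(\tau'(x)\) to \(I_n\) gives a homotopy through linear maps that preserves \(D^n\). The subtlety, and the main obstacle, is compatibility on the overlap \(S^{n-1}\times S^{m-1}\). The outer square requires the two composites from \(S^{n-1}\times S^{m-1}\) to agree up to homotopy: \((\tau'_x\times1)\circ(1\times\iota)\) and \((\iota\times1)\), followed into \(S^{n+m-1}\). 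These agree because \(\iota\circ\tau'_x\simeq\iota\) as maps \(S^{n-1}\to D^n\). Via the path in \(\mathrm{SO}(n)\) this homotopy is the identity's extension, so the outer square is a homotopy pushout equivalent to the upper-left one. Since the induced map on homotopy pushouts depends on the chosen homotopy, I will build \(\tau_x\) directly on the strict model as above. I will then check that the triple \((\tau'_x\times1,\ \tau'(x)|_{D^n}\times1,\ \text{this homotopy})\) induces it, using the homotopy from \(\tau'(x)|_{D^n}\) to \(1\) together with the gluing lemma for homotopy pushouts. This gives the homotopy commutativity and identifies the dashed map with \(\tau_x\).
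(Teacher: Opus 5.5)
Your proposal is correct and follows the paper's route. The paper gives no separate proof: the lemma carries \qed and rests on the preceding discussion. That discussion uses \(k<n\) to take a block-diagonal representative \(\tau'(x)\oplus I_m\), models \(S^{n+m-1}\) as the join \(S^{n-1}\ast S^{m-1}\) (equivalently \(\partial(D^n\times D^m)\)), and reads off that \(\tau(x)\) acts as \(\tau'_x\) on the first factor and trivially on the second, as in (\ref{eq:join}).

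The paper writes \(1\times1\) on \(D^n\times S^{m-1}\) where the strict restriction is \(\tau'(x)|_{D^n}\times1\), and leaves this step implicit. You fill it in: you swap one for the other via a path in \(\mathrm{SO}(n)\), and you use the restriction of that homotopy on the overlap \(S^{n-1}\times S^{m-1}\). This choice matters, because it is what makes the induced map genuinely \(\tau_x\) rather than some other map of the homotopy pushouts.
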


Note that Lemma \ref{lem:action} is implies the existence of the homotopy commutative cube
\begin{equation}\label{dgm:action_cube}
    \begin{tikzcd}[row sep=2em, column sep=2em]
        S^{n-1} \times S^{m-1} \arrow[rr, "\tau'_x\times1"] \arrow[dd, "\iota\times1"] \arrow[dr, "1\times\iota"] && S^{n-1} \times S^{m-1} \arrow[dr, "1\times\iota"] \arrow[dd, near end, "\iota\times1"] \\
        & S^{n-1}\times D^m \arrow[rr, crossing over, near start, "\tau'_x\times1"] && S^{n-1}\times D^m \\ 
        D^n\times S^{m-1} \arrow[rr, equals] \arrow[dr] && D^n\times S^{m-1} \arrow[dr] \\
        & S^{n+m-1} \arrow[rr, dashed, "\tau_x"] \arrow[from=uu, crossing over] && S^{n+m-1}. \arrow[from=uu, crossing over]
    \end{tikzcd}
\end{equation} 

\section{Products with a \(\mathcal{G}^k\)-stable Poincar\'e Duality Complex}\label{sec:mainthm}

Let \(N\) and \(M\) be two Poincar\'e Duality complexes, of dimension \(n\) and \(m\), respectively. Further, assume for some \(k<n\) that \(N\) is \(\mathcal{G}^k\)-stable. Recall that since \(k<n\) every class in \(\tau\in\pi_{k-1}(\mathrm{SO}(n+m))\) corresponds to a \(\tau'\in\pi_{k-1}(\mathrm{SO}(n))\). 

\begin{thm}\label{thm:main}
    Let \(n,m\) and \(k\) be integers with \(n,m>2\), \(k>1\) and \(n>k\), and suppose that \(N\) and \(M\) are two path connected Poincar\'e Duality complexes of dimension \(n\) and \(m\), respectively. Then, if \(N\) is \(\mathcal{G}^k\)-stable, so is \(N \times M\).
\end{thm}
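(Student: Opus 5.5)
By Lemma \ref{lem:gystab}, applied to the \((n+m)\)-dimensional complex \(N\times M\), it suffices to show that \(f_{N\times M}\circ\tau_x\simeq f_{N\times M}\) for every \(\tau\in\pi_{k-1}(\mathrm{SO}(n+m))\) and every \(x\in S^{k-1}\). Since \(k<n\), the class \(\tau\) corresponds to \(\tau'\in\pi_{k-1}(\mathrm{SO}(n))\). By Lemma \ref{lem:action} and the cube (\ref{dgm:action_cube}), the map \(\tau_x\) on \(S^{n+m-1}\) is induced on homotopy pushouts by three maps: \(\tau'_x\times1\) on \(S^{n-1}\times S^{m-1}\), \(\tau'_x\times 1\) on \(S^{n-1}\times D^m\), and the identity on \(D^n\times S^{m-1}\).

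The plan is to stack cube (\ref{dgm:action_cube}) on top of cube (\ref{dgm:product_cube}). Along the top this gives the composite \(S^{n-1}\times S^{m-1}\xrightarrow{\tau'_x\times1}S^{n-1}\times S^{m-1}\) followed by the maps of (\ref{dgm:product_cube}). The induced map of pushouts is then \(f_{N\times M}\circ\tau_x\). We compare this with the pushout map given by cube (\ref{dgm:product_cube}) alone, which is \(f_{N\times M}\). The three face maps of the composite are as follows.
\begin{itemize}
\item[(i)] On \(S^{n-1}\times D^m\), it is \((f_N\circ\tau'_x)\times\beta_M\).
\item[(ii)] On \(D^n\times S^{m-1}\), it is \(\beta_N\times f_M\), unchanged.
\item[(iii)] On \(S^{n-1}\times S^{m-1}\), the two legs are \((f_N\circ\tau'_x)\times\alpha_M\) and \((\alpha_N\circ\tau'_x)\times f_M\).
\end{itemize}
The \(\mathcal{G}^k\)-stability of \(N\) and Lemma \ref{lem:gystab} give a homotopy \(H:f_N\circ\tau'_x\simeq f_N\). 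This handles (i) and the first leg in (iii).

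For the second leg, note that \(\tau'_x\) extends over \(D^n\) by the linear action, with extension \(\bar\tau'_x\). Hence \(\alpha_N\circ\tau'_x=\iota_N\circ f_N\circ\tau'_x\). We can use \(\iota_N\circ H\) here, or alternatively \(\beta_N\circ\bar\tau'_x\), since the linear action of \(\tau'(x)\in\mathrm{SO}(n)\) on the disc is isotopic to the identity through \(\mathrm{SO}(n)\) by path-connectedness. The first option is more compatible, because \(\alpha_N\circ\tau'_x=\iota_N\circ(f_N\circ\tau'_x)\) and \(\iota_N\circ H\) is exactly the image of \(H\) under \(\iota_N\). We then need the face of the cube over \(D^n\times S^{m-1}\) to be deformed compatibly. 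In the stacked cube, the map \(D^n\times S^{m-1}\to N\times\overline M\) is still \(\beta_N\times f_M\), while the map out of \(S^{n-1}\times S^{m-1}\) into \(D^n\times S^{m-1}\) is \(\iota\times 1\) precomposed with \(\tau'_x\times 1\). To fix this, I would replace the identity on \(D^n\times S^{m-1}\) in (\ref{dgm:action_cube}) by \(\bar\tau'_x\times1\); Lemma \ref{lem:action} allows this up to homotopy, since \(\bar\tau'_x\simeq 1\). The face maps of the stacked cube are then all of the form (something involving \(\tau'_x\)) \(\times\) (something on the \(M\) side), and each is homotopic to its counterpart for the identity twist.

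The key step, and the main obstacle, is that a collection of homotopies between face maps does not automatically give a homotopy between induced maps of homotopy pushouts. The homotopies must be coherent over the initial vertex \(S^{n-1}\times S^{m-1}\). In other words, we need homotopy commutative cubes whose faces' homotopies agree. I would take the coherent choice \(\mathcal{H}=H\times\beta_M\) on the \(S^{n-1}\times D^m\) face and \(\mathcal{K}=(\beta_N\circ\bar{\tau}'_{x,s})\times f_M\) on the \(D^n\times S^{m-1}\) face, where \(\bar{\tau}'_{x,s}\) is a path in \(\mathrm{SO}(n)\) from \(\tau'(x)\) to the identity. On \(S^{n-1}\times S^{m-1}\) these must agree after mapping to \(\overline{N\times M}\). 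That reduces to comparing \(\iota_N\circ H\) with \(\beta_N\circ\bar\tau'_{x,s}|_{S^{n-1}}\) as homotopies \(\alpha_N\circ\tau'_x\simeq\alpha_N\) in \(N\). This comparison is a map \(S^{n-1}\times S^1\to N\) that is trivial on \(S^{n-1}\vee S^1\), so the obstruction lies in \(\pi_n(N)\) composed through the top-cell data. I expect that one can always adjust the choice of the path \(\bar\tau'_{x,s}\), or of \(H\), to kill it; failing that, one absorbs it using the fact that \(\iota_N\circ f_N\) is null via \(\beta_N\). With coherent homotopies, the standard gluing property of homotopy pushouts yields \(f_{N\times M}\circ\tau_x\simeq f_{N\times M}\), and Lemma \ref{lem:gystab} finishes the proof.
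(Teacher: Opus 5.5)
\textbf{Verdict: there is a genuine gap.} Your plan follows the paper's own argument: reduce via Lemma~\ref{lem:gystab} to \(f_{N\times M}\circ\tau_x\simeq f_{N\times M}\) for each \(x\), then stack (\ref{dgm:action_cube}) on (\ref{dgm:product_cube}). Replacing the identity on \(D^n\times S^{m-1}\) by \(\bar\tau'_x\times 1\) is a real improvement, since it makes (\ref{dgm:action_cube}) strictly commutative. You are also right that homotopies of the face maps need not induce a homotopy of the pushout maps, and you leave that step open. But the gap is more basic than that, and it sits in the pointwise reduction itself, which the paper's proof uses in exactly the same way.

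\textbf{The pointwise condition is empty.} For a fixed \(x\), the element \(\tau(x)\) lies in the path-connected group \(\mathrm{SO}(n+m)\), so \(\tau_x\simeq 1\). Hence \(f_{N\times M}\circ\tau_x\simeq f_{N\times M}\) holds for every complex, with no hypothesis on \(N\). Your coherence problem also disappears at fixed \(x\). Take a path \(\gamma_s\) in \(\mathrm{SO}(n)\) from \(\tau'(x)\) to the identity, and set \(H=f_N\circ\gamma_s\) and \(\mathcal{K}=(\beta_N\circ\bar\gamma_s)\times f_M\). These agree on \(S^{n-1}\times S^{m-1}\) on the nose, because \(\beta_N|_{S^{n-1}}=\iota_N\circ f_N\). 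So the stability of \(N\) is never actually used.

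Read pointwise, the ``if'' direction of Lemma~\ref{lem:gystab} would make every Poincar\'e Duality complex \(\mathcal{G}^k\)-stable. That contradicts the \(\mathcal{G}^2\)-instability of \(\H P^2\) and \(\O P^2\) quoted in the paper's own examples. Homotopies \(f\circ\tau_x\simeq f\) chosen separately for each \(x\) do not assemble into a homotopy \((f\times 1)\circ t\simeq f\times 1\) over \(S^{k-1}\). The class of \(\tau\) only becomes visible at that parametrised level.

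\textbf{The parametrised version fails.} Once every homotopy must vary continuously in \(x\in S^{k-1}\), your construction breaks down. A path \(\bar\tau'_{x,s}\) depending continuously on \(x\) would be a null-homotopy of \(\tau'\). With \(\mathcal{H}=H\times\beta_M\) as you propose, extending over \(D^n\times S^{m-1}\) is obstructed by an \(S^{k-1}\)-family of maps \(S^n\to N\).

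This obstruction is already nonzero for \(N=S^n\). There \(\overline{N}\) is a point and \(H\) is forced to be constant. Let \(\rho_x\) be the rotation of \(S^n\) that fixes both poles and acts by \(\tau'(x)\). The obstruction is the class of \(x\mapsto\rho_x\) in \(\pi_{k-1}(\Omega^n_1 S^n)\cong\pi_{n+k-1}(S^n)\), which is \(\pm J(\tau')\), and this is nonzero for \(k=2\).

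Yet \(S^n\times M\) is \(\mathcal{G}^k\)-stable, for a different reason. The map \((y,x)\mapsto((\rho_x\times 1_M)(y),x)\) is a self-homeomorphism of \(\overline{S^n\times M}\times S^{k-1}\). Post-composing with it turns \(f_{S^n\times M}\times 1\) into \((f_{S^n\times M}\times 1)\circ t\) on the nose, so it identifies the two pushouts. In general, then, the twist has to be absorbed by a fibrewise self-equivalence of the skeleton over \(S^{k-1}\), not by a homotopy of attaching maps.

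\textbf{The missing idea.} You would need to extract such a family for \(N\) from \(\mathcal{G}^k\)-stability, in a form that survives taking the product with \(1_M\). As defined, that hypothesis only gives an abstract equivalence \(\mathcal{G}^k_{\tau'}(N)\simeq\mathcal{G}^k_0(N)\). Neither your argument nor the paper's bridges this.
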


\begin{proof}
    Let \(x\in S^{k-1}\) and \(\tau\in\pi_{k-1}(\mathrm{SO}(n+m))\) be arbitrary, and consider the combination of (\ref{dgm:product_cube}) and (\ref{dgm:action_cube}) as the homotopy commutative diagram
    \begin{equation*}
        \begin{tikzcd}[row sep=2em, column sep=2em]
            S^{n-1} \times S^{m-1} \arrow[rr, "\tau'_x\times1"] \arrow[dd, "\iota\times1"] \arrow[dr, "1\times\iota"] && S^{n-1} \times S^{m-1} \arrow[rr, equals] \arrow[dr, "1\times\iota"] \arrow[dd, near start, "\iota\times1"] && S^{n-1} \times S^{m-1} \arrow[dd, near end, "\alpha_N\times f_M"] \arrow[dr, "f_N\times\alpha_M"] \\
            & S^{n-1}\times D^m \arrow[rr, crossing over, near start, "\tau'_x\times1"] && S^{n-1}\times D^m \arrow[rr, crossing over, near start, "f_N\times\beta_M"] && \overline{N}\times M \arrow[dd] \\ 
            D^n\times S^{m-1} \arrow[rr, equals] \arrow[dr] && D^n\times S^{m-1} \arrow[rr, near end, "\beta_N\times f_M"] \arrow[dr] && N\times\overline{M} \arrow[dr]\\
            & S^{n+m-1} \arrow[rr, dashed, "\tau_x"] \arrow[from=uu, crossing over] && S^{n+m-1} \arrow[rr, dashed, "f_{N\times M}"] \arrow[from=uu, crossing over] && \overline{N\times M}.
        \end{tikzcd}
    \end{equation*}
    By Lemma \ref{lem:gystab}, if \(N\) is \(\mathcal{G}^k\)-stable, then we have \(f_N\circ\tau'_x\simeq f_N\) for any \(x\) and \(\tau'\). In particular, in this case the horizontal composites in the above combined diagram reduce up to homotopy to the cube (\ref{dgm:product_cube}), and consequently to the pushout diagram (\ref{dgm:productpushout2}). 
    
    Thus, on the one hand we have the composite \(f_{N\times M}\circ\tau_x\) from the above, but on the other hand, the assumption of \(\mathcal{G}^k\)-stability for \(N\) implies that this must have the same homotopy class as the induced map from (\ref{dgm:productpushout2}), which is \(f_{N\times M}\). Therefore, since our choice of \(x\) and \(\tau\) was arbitrary, \(f_{N\times M}\circ\tau_x\simeq f_{N\times M}\) holds for all for all \(x\in S^{k-1}\) and all twistings \(\tau\in\pi_{k-1}(\mathrm{SO}(n+m))\), and so \(N\times M\) is \(\mathcal{G}^k\)-stable.
\end{proof}

We finish by applying Theorem \ref{thm:main} to some examples.

\begin{exa}
    In \cite{ChenTher:gy_stab} Theriault and the author studied gyration stability for projective planes; more precisely, \(\mathcal{G}^k\)-stability was considered for \(\H P^2\) in indices \(k<7\) and \(\O P^2\) in indices \(k<15\). We found that \(\H P^2\) is \(\mathcal{G}^k\)-unstable only when \(k=2\), and that \(\O P^2\) is \(\mathcal{G}^k\)-unstable when \(k=2,4\) or \(12\). Despite these individual instances of instability, Theorem \ref{thm:main} implies that their product \(\H P^2\times\O P^2\) is \(\mathcal{G}^k\)-stable for all \(k\) in the range \(2<k<12\). Of particular curiosity is that whilst both \(\H P^2\) and \(\O P^2\) are unstable for \(k=2\), this does not rule out that their product may be \(\mathcal{G}^2\)-stable.
\end{exa}

\begin{exa}
    Recall that spheres are \(\mathcal{G}^k\)-stable for all \(k\). Therefore one has that for a Poincar\'e Duality complex \(M\) the product \(S^n\times M\) will be \(\mathcal{G}^k\)-stable for all \(k<n\). As a particular instance, take the finite product of spheres \(P=\prod_{i=1}^r S^{n_i}\) as in Example \ref{exa:sphereprod_skeleton}. Then, since we can permute terms in the product, Theorem \ref{thm:main} implies that \(P\) is \(\mathcal{G}^k\)-stable for \(k<\mathrm{max}\{n_i:i=1,\dots ,r\}\). This directly extends \cite{chenery:fico}*{Corollary 6.5 (ii)}, which concerned only a product of two spheres, though make no claims with regards to finding the homotopy type of \(\mathcal{G}^k_0(P)\) when \(r>2\).
\end{exa}

        
\bibliographystyle{amsplain}
\bibliography{bib}

\end{document}